\documentclass{article}

\usepackage{amssymb,amsthm,amsmath,marvosym,amsfonts,fontenc}
\usepackage[english]{babel}

\usepackage{mathtools}
\usepackage{xcolor}
\usepackage{hyperref}

\usepackage{tikz}
\usetikzlibrary{arrows.meta,positioning,shapes.geometric}

\usepackage{authblk}
\usepackage{comment}

\hypersetup{
    colorlinks=true,
    linkcolor=blue!50!black,
    citecolor=blue!50!black,
    urlcolor=blue!50!black
}

\newtheorem{theorem}{Theorem}[section]

\newtheorem{conjecture}[theorem]{Conjecture}
\newtheorem{corollary}[theorem]{Corollary}

\theoremstyle{definition}

\theoremstyle{remark}

\title{Odd minors or odd immersions in graphs with independence number two}

\author[1]{Antonia Berm\'udez} 
\author[2]{Bruno L. Netto}
\author[1]{Daniel A. Quiroz}

\date{}
	
\affil[1]{\small Instituto de Ingenier\'ia Matem\'atica-CIMFAV, Universidad de Valpara\'iso, Chile.}

\affil[2]{\small Universidade Federal do Rio de Janeiro, Rio de Janeiro, Brazil.}

\begin{document}

\maketitle

\begin{abstract}

K\"{u}hn, Sauermann, Steiner and Wigderson recently disproved the Odd Hadwiger Conjecture, even for graphs
with independence number $2$. For this class of graphs the conjecture is known to be equivalent to the following:
every $n$-vertex graph $G$ with independence number $2$ contains $K_{\lceil \frac n2 \rceil}$ as an odd minor. 
While this does not hold, we prove that every graph $G$ with independence number $2$ contains $K_{\lceil \frac n2 \rceil}$
as an odd minor or as a totally odd immersion. 

\end{abstract}

\section{Introduction}

Hadwiger's conjecture~\cite{hadwiger1943klassifikation} asserts that every (loopless)
graph contains $K_{\chi(G)}$ as a minor. This aims at widely generalising of the
Four Colour Theorem, which has been shown to imply cases $\chi(G)=5$ and $\chi(G)=6$ \cite{Had6}. 
This celebrated conjecture remains open whenever $\chi(G) \geq 7$. It even remains open for the innocent-looking class of
graphs with independence number $2$. We denote  the independence number of a graph $G$ by $\alpha (G)$. Plummer, Stiebitz
and Toft~\cite{PlummerST} proved that for this class of graphs the conjecture  is equivalent to showing that every $n$-vertex
graph with $\alpha(G)=2$ contains $K_{\lceil \frac{n}{2} \rceil}$ as a minor. A result of Duchet and Meyniel~\cite{DuchetMeyniel}
implies that every graph with $\alpha(G)=2$ contains $K_{\lceil \frac{n}{3} \rceil}$ as a minor. Despite much related work 
(see e.g.~\cite{Blasiak, chudnovsky2012seagulls, Fox2010, KawaSong2007, NorinSeymour}), it is still open whether there is a 
constant $c>\frac 13$ such that every $n$-vertex graph~$G$ with $\alpha(G)=2$ contains $K_{cn}$ as a minor. 

Gerards and Seymour (see~\cite{gerards1995odd}) proposed a strengthening of Hadwiger's conjecture which is known as the Odd Hadwiger Conjecture and states that every graph $G$ contains $K_{\chi(G)}$ as an odd minor.
This strengthening has received considerable attention (e.g.~\cite{GeelenGRSV, KawaSongOdd,NorinSong}), the case $\chi(G)=4$ following from a result of Catlin~\cite{Catlin}. Steiner~\cite{steiner2022asymptotic} proved that the asymptotics
related to the Odd Hadwiger Conjecture are tied to those of Hadwiger's conjecture up to a factor of $2$. And yet, K\"uhn, Sauermann, Steiner and Wigderson, recently 
 disproved the Odd Hadwiger Conjecture~\cite{kuehn2025disproof} by constructing graphs which do not 
contain $K_t$ as an odd minor but have chromatic number at least $(\frac{3}{2} - o(1))t$. The counterexamples given by these authors are precisely graphs with independence number 2, and their result is tight for this class of graphs. 

Shortly before these counterexamples appeared, Ji, Song, Weiss and Zhang~\cite{jia2025oddclique} followed arguments of Plummer, Stiebitz and Toft to prove the following.

\begin{theorem}[Ji, Song, Weiss and Zhang~\cite{jia2025oddclique}]\label{thm:equivminor}
    Every  $n$-vertex graph $G$ with $\alpha(G)\le 2$ contains $K_{\lceil \frac n2 \rceil}$ as an odd minor if, and only if, every such graph contains $K_{\chi(G)}$ as an odd minor.
\end{theorem}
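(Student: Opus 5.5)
We prove the two implications separately, following the Plummer--Stiebitz--Toft template but checking that each step preserves \emph{oddness} of the minor.

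\textbf{The easy direction.} Suppose every graph $G$ with $\alpha(G)\le 2$ contains $K_{\chi(G)}$ as an odd minor. Each colour class of $G$ is independent, so it has at most $2$ vertices. Hence $\chi(G)\ge \lceil n/2\rceil$, and $G$ contains $K_{\lceil n/2\rceil}$ as an odd minor, because deleting vertices of an odd $K_t$ model gives an odd $K_{t'}$ model for every $t'\le t$.

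\textbf{The hard direction.} Assume every $n$-vertex graph with $\alpha\le 2$ has an odd $K_{\lceil n/2\rceil}$ minor, and take $G$ with $\alpha(G)\le 2$ and $\chi(G)=k$. The plan has three steps.

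\begin{enumerate}
\item \emph{Reduce to the critical case.} Pass to a $k$-vertex-critical induced subgraph $H$. It has $\alpha(H)\le 2$, and an odd minor of $H$ is an odd minor of $G$. So we may assume $G$ is $k$-vertex-critical.
\item \emph{Colour with edges.} Let $\nu$ be the maximum size of a matching in the complement $\overline{G}$. The nonedges of $G$ in such a matching, together with singletons for the remaining vertices, give a proper colouring. Thus $\chi(G)\le n-\nu$.
\item \emph{Find a spanning subgraph where the count is exact.} Following Plummer--Stiebitz--Toft, we want an induced subgraph $G'$ on $n'$ vertices with $\lceil n'/2\rceil\ge\chi(G)$. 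The natural candidate is $G'=G-U$, where $U$ comes from the Gallai--Edmonds decomposition of $\overline{G}$. Let $D$ be the vertices missed by some maximum matching, $A=N(D)\setminus D$, and $C$ the rest.
\end{enumerate}

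\textbf{Main obstacle: the structure analysis.} In the original argument one shows, using criticality and $\alpha\le 2$, that $\overline{G}$ has a perfect or near-perfect matching, so $n\le 2\chi(G)$, unless one of two things happens.
\begin{itemize}
\item $G$ is disconnected in the complement-join sense, that is, $G$ is a join $G_1\vee G_2$.
\item Some component structure forces a clique cutset, on which one inducts.
\end{itemize}

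For the join case, $\chi(G)=\chi(G_1)+\chi(G_2)$. By induction $G_i$ has an odd $K_{\chi(G_i)}$ model. Since every vertex of $G_1$ is adjacent to every vertex of $G_2$, the two models combine into an odd $K_{\chi(G)}$ model. To check this, recolour all branch-set trees of $G_2$ with the opposite parity if needed, so that each edge between $G_1$ and $G_2$ chosen between branch sets is bichromatic. Such an edge can be picked because all cross edges are present and each branch set is $2$-coloured.

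Thus, if $\overline{G}$ is disconnected we are done by this join argument. Otherwise, criticality and $\alpha\le2$ give that $\overline{G}$ is connected and factor-critical or has a perfect matching, by Gallai's lemma applied to colour classes. Then $\chi(G)=\lceil n/2\rceil$ up to the matching bound, and the hypothesis applies directly to $G$. I expect verifying this Gallai-type step, that critical graphs with $\alpha\le2$ and connected complement satisfy $\chi=\lceil n/2\rceil$, to be the crux. The first attempt there is to use Gallai's theorem that the complement of a $k$-critical graph with fewer than $2k-1$ vertices is disconnected. It gives $n\ge 2k-1$ whenever $\overline{G}$ is connected, and combined with $\chi\ge\lceil n/2\rceil$ this yields $k=\lceil n/2\rceil$.
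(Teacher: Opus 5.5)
Your architecture is the standard one: reduce to a vertex-critical induced subgraph, split $G$ as a join when $\overline{G}$ is disconnected, and otherwise get $n\ge 2k-1$. This is the Plummer--Stiebitz--Toft argument, which the paper says Ji, Song, Weiss and Zhang adapt; the paper itself only cites the result. Your easy direction is correct.

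\textbf{The gap: the join step has the parity condition backwards.} This is the only place where oddness matters. In an odd $K_t$ model the edges \emph{inside} each tree are bichromatic, while every two trees must be joined by a \emph{monochromatic} edge. Your justification, that a suitable cross edge exists because each branch set is $2$-coloured, fails for single-vertex branch sets, which carry only one colour. Recolouring individual trees of $G_2$ is also not allowed, because it can destroy the monochromatic edges joining that tree to the other trees of $G_2$.

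The missing observation is this. Two singleton branch sets of an odd clique model must be joined by a monochromatic edge, so all singletons in the model of $G_1$ have the same colour, and likewise in $G_2$. Swapping the two colours on the \emph{whole} model of $G_2$ keeps it an odd model, so you may assume the singleton colours on both sides agree. Now take trees $T\subseteq V(G_1)$ and $T'\subseteq V(G_2)$.
\begin{itemize}
\item If one of them has at least two vertices, it contains both colours. Since every $G_1$--$G_2$ edge is present, a monochromatic edge between $T$ and $T'$ exists.
\item If both are singletons, they have the same colour, so the edge between them is monochromatic.
\end{itemize}
This gives an odd $K_{\chi(G_1)+\chi(G_2)}$ model in $G_1\vee G_2$.

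\textbf{The connected-complement case needs rewriting.} Several statements there are wrong or idle:
\begin{itemize}
\item The bullet about clique cutsets plays no role in the argument.
\item You write ``$n\le 2\chi(G)$'', which is the trivial inequality; what you need is $n\ge 2\chi(G)-1$.
\item A perfect matching in $\overline{G}$ cannot occur for vertex-critical $G$, since deleting a vertex would then not lower $\chi$.
\item Step~3 (Gallai--Edmonds) is never used.
\end{itemize}
Also, Gallai's theorem is classically stated for $k$-critical graphs, whereas you reduced to vertex-critical ones. You can bridge this in either of two ways.
\begin{itemize}
\item Note that any $k$-critical subgraph of a vertex-critical $G$ is spanning. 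Its complement contains $\overline{G}$, so it is connected, and Gallai's theorem applies.
\item Or argue directly. Since $\alpha(G)\le 2$, colourings of $G$ correspond to matchings of $\overline{G}$, so $\chi(G)=n-\nu(\overline{G})$. Vertex-criticality then gives $\nu(\overline{G}-v)=\nu(\overline{G})$ for every $v$. By Gallai's lemma the connected graph $\overline{G}$ is factor-critical, so $n$ is odd and $\chi(G)=\frac{n+1}{2}=\lceil\frac n2\rceil$. The hypothesis then applies to $G$ itself.
\end{itemize}
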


In 1982, Duchet and Meyniel~\cite{DuchetMeyniel} conjectured a weakening of Hadwiger's conjecture, namely, that every $n$-vertex graph $G$ contains $K_{\lceil \frac n{\alpha(G)}\rceil}$ as a minor. While their conjecture remains open, Theorem~\ref{thm:equivminor} and the above mentioned counterexamples imply that even the corresponding weakening of the Odd Hadwiger Conjecture does not hold.

\begin{corollary}\label{coro:coro}
    There exist $n$-vertex graphs $G$ with $\alpha(G)=2$ that do not contain $K_{\lceil \frac n2 \rceil}$ as an odd minor. 
\end{corollary}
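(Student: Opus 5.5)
The corollary follows by combining Theorem~\ref{thm:equivminor} with the counterexamples of K\"uhn, Sauermann, Steiner and Wigderson~\cite{kuehn2025disproof}. The argument is by contradiction. Suppose every $n$-vertex graph $G$ with $\alpha(G)\le 2$ contains $K_{\lceil \frac n2\rceil}$ as an odd minor. The ``only if'' direction of Theorem~\ref{thm:equivminor} then says that every graph $G$ with $\alpha(G)\le 2$ contains $K_{\chi(G)}$ as an odd minor. In other words, the Odd Hadwiger Conjecture would hold for all graphs with independence number at most $2$.

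Next I invoke~\cite{kuehn2025disproof}. It provides, for suitably large $t$, a graph $G$ with $\alpha(G)=2$ which has no odd $K_t$ minor but satisfies $\chi(G)\ge (\frac32-o(1))t$. For large $t$ this gives $\chi(G)>t$. Since $K_t$ is a subgraph of $K_{\chi(G)}$, and containment as an odd minor is transitive (any subgraph of a graph is an odd minor of it), $G$ has no odd $K_{\chi(G)}$ minor either. This contradicts the previous paragraph, so the assumption fails. Hence some $n$-vertex graph $G$ with $\alpha(G)\le 2$ has no odd $K_{\lceil \frac n2\rceil}$ minor.

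It remains to upgrade $\alpha(G)\le 2$ to $\alpha(G)=2$. If $\alpha(G)=1$ then $G=K_n$. This graph contains $K_{\lceil \frac n2\rceil}$ as a subgraph and hence as an odd minor, so it cannot be the graph we found. Therefore the graph has $\alpha(G)=2$. Alternatively, the counterexamples of~\cite{kuehn2025disproof} already have independence number exactly $2$.

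None of these steps is a real obstacle, since the work lies in the cited results. The only point needing care is the one used above: the absence of an odd $K_t$ minor passes to the larger clique $K_{\chi(G)}$. This rests on monotonicity of odd minors under taking subgraphs of the target clique.
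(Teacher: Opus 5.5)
Your proposal is correct and follows the paper's route: the paper obtains the corollary by combining the nontrivial (``only if'') direction of Theorem~\ref{thm:equivminor} with the independence-number-$2$ counterexamples of K\"uhn, Sauermann, Steiner and Wigderson, as you do. Your extra remarks make explicit two details the paper leaves implicit: an odd $K_t$-free graph with $\chi(G)\ge t$ also has no odd $K_{\chi(G)}$ minor, and the case $\alpha(G)=1$ is excluded.
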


A conjecture analogous to Hadwiger's, is that of Lescure and Meyniel ~\cite{LescureMeyniel}, which asks whether every graph $G$ contains $K_{\chi(G)}$ as an immersion.
A graph~$G$ contains another graph $H$ as an \underline{immersion} if there exists an one-to-one mapping $\varphi\colon V(H)\rightarrow V(G)$ such that:
\begin{itemize}
\item[(I)] For every $uv\in E(H)$, there is a path in $G$, denoted $P_{uv}$, with endpoints $\varphi(u)$ and~$\varphi(v)$.
\item[(II)] The paths in $\{P_{uv} \mid uv\in E(H) \}$ are pairwise edge disjoint.
\item[(III)] The vertices of $\varphi(V(H))$, called \underline{terminals}, do not appear as internal vertices on paths~$P_{uv}$.
 \end{itemize}
Note that if condition (II) is strengthened so that the paths are internally vertex-disjoint, then $G$ contains~$H$ as a subdivision.  Thus, if $G$ contains $H$ as a subdivision, 
then it contains $H$ as an immersion (and as a minor). If conditions (I) and (II) are fulfilled but not necessarily (III), then we say that $G$ contains $H$ as a \underline{weak immersion}. 
If $G$ contains $H$ as an immersion where all the corresponding paths are of odd length, then we say that $G$ contains $H$ as a \underline{totally odd immersion}.

The conjecture of Lescure and Meyniel is known to hold  for $\chi(G)\le 7$ thanks to results of Lescure and Meyniel~\cite{LescureMeyniel} and of  DeVos, Kawarabayashi, Mohar and 
Okamura~\cite{DeVosKMO}. It remains open for higher values of $\chi(G)$. For graphs with independence number 2, Vergara~\cite{vergara2017complete} showed that for this class of 
graphs the conjecture  is equivalent to showing that every $n$-vertex graph with $\alpha(G)=2$ contains $K_{\lceil \frac{n}{2} \rceil}$ as an immersion. She proved that every such graph 
contains $K_{\lceil \frac{n}{3} \rceil}$ as an immersion, while Gauthier, Le and Wollan~\cite{GauthierLW} improved this to $K_{2\lfloor \frac{n}{5} \rfloor}$. Moreover, Botler et 
al.~\cite{botler2025largecliques} proved that every graph~$G$ with $\alpha(G)=2$ that meets certain restrictions on the maximum degree contains an immersion of $K_{\lceil \frac{n}{2} \rceil}$, 
and Dahlke~\cite{Dahlke} showed that every graph $G$ with $\alpha(G)=2$ contains a weak immersion of $K_{\lceil \frac{n}{2} \rceil}$.

The following conjecture, which is inspired by a weaker conjecture of Churchley~\cite{Churchley}, is the immersion-analogue of the Odd Hadwiger Conjecture.

\begin{conjecture}[Jim\'enez, Quiroz and Thraves Caro~\cite{JimenezQT}]\label{conj:tosi}
Every graph $G$ contains $K_{\chi(G)}$ as a totally odd immersion.
\end{conjecture}

Conjecture~\ref{conj:tosi} holds whenever $\chi(G)\le 4$ due to a result independently proved by Thomassen~\cite{Thomassen} and Zang~\cite{Zang}. It is open for larger values of $\chi(G)$ and also for graphs with independence number 2. The result of Gauthier, 
Le and Wollan mentioned above holds for totally odd immersions as well, that is, every $n$-vertex graph with $\alpha(G)=2$ contains a totally odd immersion of 
$K_{2\lfloor \frac{n}{5} \rfloor}$. Additionally, every graph with $\alpha(G)=2$ and no totally odd immersion of $K_t$ satisfies $\chi(G)\le \frac{3(t-1)}{2}$, 
as proved by Echeverr\'ia and McDonald~\cite{HenryJessica}.

In this note we prove the following, which contrasts with Corollary~\ref{coro:coro}.

\begin{theorem}\label{thm:main}
Every graph $G$ with $\alpha(G)=2$ contains $K_{\lceil \frac n2 \rceil}$ as an odd minor or as a totally odd immersion.
\end{theorem}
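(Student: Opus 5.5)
We argue by induction on $n$, mirroring the Plummer--Stiebitz--Toft / Duchet--Meyniel approach of building a clique minor from connected dominating pieces of size at most $3$.

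\textbf{Reductions.} Take $G$ with $\alpha(G)=2$ on $n$ vertices. If $G$ has $\lceil n/2\rceil$ pairwise adjacent vertices we are done, since $K_{\lceil n/2\rceil}$ is then a subgraph, which is both an odd minor and a totally odd immersion. If $G$ is disconnected, then each component is a clique (because $\alpha=2$) and one of them has at least $\lceil n/2\rceil$ vertices. If $G$ contains an induced path $P$ on three vertices $a-b-c$, then every vertex outside $P$ is adjacent to $a$ or $c$. Hence contracting $P$ gives a vertex dominating the rest, and $G-V(P)$ still has $\alpha\le2$. Moreover, the branch set $V(P)$ is a tree whose $2$-colouring ($a,c$ one colour, $b$ the other) is compatible with odd-minor bicolouring, provided the edges leaving $P$ to other branch sets are properly coloured. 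We then apply induction to $G-V(P)$, which has $n-3$ vertices, and add $P$ as a new branch set. This raises the clique order by one at the cost of three vertices, which is too weak on its own. The standard fix is to pair an induced $P_3$ with a second structure, or to use the fact that $P_3$'s cover many vertices. So the real content is the case analysis when $G$ has no good $P_3$ (no induced $P_3$ meaning $G$ is a disjoint union of cliques, already handled), or when the minors built from $P_3$'s fail the parity condition.

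\textbf{Main plan.} Consider a maximum collection of vertex-disjoint induced paths $P_3$ forming a ``seagull''-type packing, together with the remaining vertices, which form a clique-like set $R$. The ideal outcome is a clique minor using $k$ seagulls and $n-3k$ singletons, of order $n-2k\ge n/2$ when $k\le n/4$. The odd-minor condition fails only when some pair of branch sets has all connecting edges of the ``wrong'' parity. In such a failure I would exploit that wrong-parity connections between a seagull $a-b-c$ and a vertex $v$ mean $v$ is adjacent only to same-coloured ends. Rather than contracting, I would then route paths: in the totally odd immersion we may use $v$'s two adjacencies together with the edges $ab,bc$ to build a path of length $3$ (odd) between terminals, with $b$ non-terminal. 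The plan is therefore a dichotomy. Either every seagull can be given a consistent $2$-colouring relative to the rest, yielding the odd minor, or a conflict structure exists. In the latter case we choose terminals to be the singletons in $R$ plus one endpoint from each seagull, and use the middle vertices and the other endpoints as relay vertices. Each relay yields paths of length $1$ or $3$ between terminals, giving an edge-disjoint family of odd paths.

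\textbf{Main obstacle.} The hard step is the immersion case. We must certify edge-disjointness when many terminal pairs are nonadjacent, and each missing edge must be routed through a distinct relay with odd length. Counting is the crux: a nonadjacent terminal pair $x,y$ has a common neighbour, since $\alpha=2$ forces any third vertex to be adjacent to $x$ or $y$, but that gives a path of length $2$, which is even. So we need length-$3$ routes $x-u-w-y$ with $uw$ an edge, and $\alpha(G)=2$ guarantees such routes plentifully. I would set this up as an assignment problem: each missing pair is mapped to an edge $uw$ among non-terminals. Existence would be proved via a Hall-type argument, using that non-terminals number about $n/2$ and, by $\alpha=2$, span nearly a clique's worth of edges. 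I expect this matching/edge-disjointness counting, together with ensuring that the odd-minor obstruction really forces the structure needed there, to be the main difficulty.
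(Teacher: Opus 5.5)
There is a genuine gap: your dichotomy is built on an obstruction that never occurs. Consider a model whose branch sets are singletons from a clique $R$ together with disjoint seagulls $a$--$b$--$c$ (with $ac\notin E(G)$). In such a model parity never fails. Colour all of $R$ and all seagull ends red, and all centres blue. Each tree is then properly coloured, and two singletons are joined by a red edge. Since $\alpha(G)=2$, every vertex outside a seagull is adjacent to $a$ or $c$, so every singleton and every end of another seagull sends a red--red edge to it. Hence your ``conflict structure'' branch is empty and gives the immersion nothing to work with.

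What actually fails is size. A \emph{maximum} packing makes $k$ as large as possible (up to about $n/3$), which pushes $n-2k$ down, not up. The leftover set is only a disjoint union of at most two cliques, so in general you get about the Duchet--Meyniel $n/3$. Your immersion branch, with terminals $R$ plus one end per seagull, also has only $|R|+k$ terminals, so it inherits the same problem. Getting few enough seagulls plus a large enough clique to reach $\lceil n/2\rceil$ branch sets is exactly the hard content of Chudnovsky--Seymour, which needs $\omega(G)\ge r$ (with $r=\lceil n/4\rceil$ for even $n$ and $r=\lceil (n+3)/4\rceil$ for odd $n$). By the colouring above, the K\"uhn--Sauermann--Steiner--Wigderson counterexamples contain no seagull-plus-clique model with $\lceil n/2\rceil$ branch sets at all, so the obstruction there is structural, not parity. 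Finally, the induction does not mesh with the ``or'': an immersion found in $G-V(P)$ cannot be extended by adding $P$ as a branch set.

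The missing idea is to split on $\omega(G)$. If $\omega(G)\ge r$, the odd-minor strengthening of Chudnovsky--Seymour (Chen--Deng; Ji--Song--Weiss--Zhang) gives the odd minor. If $\omega(G)<r$, then since $\bar N(v)$ is a clique for every $v$, we get $\delta(G)\ge n-1-\omega(G)\ge n-r\approx 3n/4$. This degree bound is what drives the immersion count, and your Hall step has no such input. Indeed, $\alpha(G)=2$ only makes $\overline{G}$ triangle-free, so the non-terminals need not span nearly a clique. A non-adjacent pair need not even have a common neighbour: take two cliques joined by one edge. ``Adjacent to $x$ or $y$'' is not ``adjacent to both''.

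The paper proceeds as follows.
\begin{itemize}
\item It takes as terminals an edge-maximising set $A$ of $\lceil n/2\rceil$ vertices.
\item It labels $A$ and $B=V(G)\setminus A$ along a maximum matching of non-edges, after a relabelling that secures $|N(b_i)\cap A|\le|N(a_i)\cap A|$ where needed. As a result, $a_j$ is adjacent to $a_i$ or $b_i$ whenever $i<j$.
\item Each non-edge $a_ia_j$ with $i<j$ is routed as $a_jb_iwa_i$, with the edge $a_jb_i$ reserved.
\item Hall's theorem supplies the relays $w$, because $a_i$ and $b_i$ keep at least $|\bar N(a_i)\cap A|$ common neighbours in $B$ once the reserved edges are deleted. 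This count uses $\delta(G)\ge n-r$ essentially.
\end{itemize}
If you carry this out, make sure your auxiliary graph controls all three edges of each length-$3$ path. The paper's auxiliary graph only forces the edges $a_iw$ to be distinct. That alone does not prevent two indices $i\ne i'$ with $a_ia_{i'}\in E(G)$ from both using the middle edge $b_ib_{i'}$ (relay $b_{i'}$ for $i$ and relay $b_i$ for $i'$), so that point needs a separate argument.
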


Our approach to Theorem~\ref{thm:main} is inspired by the following result of Chudnovsky and Seymour~\cite{chudnovsky2012seagulls}.

\begin{theorem}[Chudnovsky and Seymour~\cite{chudnovsky2012seagulls}]\label{thm:cs}
Let $G$ be an $n$-vertex graph  with $\alpha(G)=2$ and 
    \[
    \omega(G)\ge
    \begin{cases}
         \lceil \frac n4\rceil , & \text{if $n$ is even},\\
        \lceil \frac {n+3}4\rceil , & \text{if $n$ is odd}.
    \end{cases} 
    \]
    Then $G$ contains $K_{\lceil \frac n2 \rceil}$ as a minor.
\end{theorem}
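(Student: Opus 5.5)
The plan is to use the hypothesis on $\omega(G)$ through a fixed maximum clique $K$, $|K|=k$, whose vertices serve as singleton branch sets. We then add enough further branch sets that are connected and \emph{dominating}. The basic object is a \emph{seagull}: an induced path $a\!-\!b\!-\!c$ with $ac\notin E(G)$. Since $\alpha(G)=2$, every vertex outside $\{a,b,c\}$ is adjacent to $a$ or to $c$, because otherwise it would form an independent set of size $3$ with $a$ and $c$. So a seagull is a connected dominating set of size $3$. Any family of pairwise disjoint seagulls in $G-K$, together with the singletons of $K$, is therefore a family of pairwise touching connected branch sets. Hence it suffices to find
\[
s=\left\lceil \tfrac n2\right\rceil-k
\]
disjoint seagulls in $G-K$. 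Other dominating structures of size at most $3$ can stand in for seagulls. The counting fits exactly: $k+3s\le n$ is equivalent to $2k\ge 3\lceil \frac n2\rceil-n$. This reads $k\ge \lceil \frac n4\rceil$ for $n$ even and $k\ge\lceil\frac{n+3}4\rceil$ for $n$ odd, which is precisely the hypothesis. This explains the two cases of the threshold.

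The main step is to take a maximal family $\mathcal S$ of disjoint seagulls in $H=G-K$ and study the leftover set $L=V(H)\setminus\bigcup\mathcal S$. By maximality, $G[L]$ contains no induced $P_3$, so each component of $G[L]$ is a clique. Because $\alpha(G)\le 2$, $G[L]$ has at most two components, say cliques $A$ and $B$ (with $B$ possibly empty), and each has size at most $k$ by maximality of $K$.

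If $|\mathcal S|\ge s$ we are done. Otherwise $|L|$ is large, roughly $|L|\ge n-k-3(s-1)$, and we must manufacture the missing branch sets from $A\cup B\cup K$. There are three tools for this:
\begin{itemize}
\item[(i)] A vertex of $A\cup B$ that is complete to $K\cup A\cup B$ and to every seagull is itself a valid singleton branch set.
\item[(ii)] A vertex $x\in K$ has at most one of $A,B$ with a vertex nonadjacent to it. Indeed, if $x$ were nonadjacent to $u\in A$ and $w\in B$, then $\{x,u,w\}$ would be independent. Thus non-edges between $K$ and $A$ (or $B$) yield new seagulls of the form $u\!-\!y\!-\!x$ or $u\!-\!x'\!-\!x$, which use one vertex of $K$ together with vertices of $L$.
\item[(iii)] Exchange arguments replace a seagull of $\mathcal S$ by two seagulls using vertices of $L$, contradicting maximality unless strong adjacency conditions hold.
\end{itemize}

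The proof would first choose $\mathcal S$ to be maximum, and among those choose it to minimise $|B|$, and then apply (iii) to show that $L$ is essentially complete to $\bigcup\mathcal S$. Next comes a Hall-type matching argument between the non-edges from $K$ to $A$ and those from $K$ to $B$. It should show that each vertex of $K$ sacrificed to build a seagull via (ii) is compensated: either by that seagull, or by a vertex of $L$ that becomes complete to everything remaining and can be used via (i). The bound $|A|,|B|\le k$ and the parity bookkeeping (the extra $+3$ for odd $n$) enter here, to guarantee that the total count reaches $\lceil\frac n2\rceil$.

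I expect this last step to be the main obstacle: controlling the bipartite non-adjacency pattern between $K$ and the two leftover cliques so that sacrificing clique vertices never loses branch sets. The first thing I would try is to prove the matching statement (a Tutte--Berge/Hall condition on the complement restricted to $K\times(A\cup B)$, which is triangle-free) and derive the count from it.
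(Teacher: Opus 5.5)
The reduction you set up is correct: disjoint seagulls in $G-K$ together with the singletons of $K$ are branch sets of a clique minor. Your observation that $k+3s\le n$ is exactly the hypothesis on $\omega(G)$ explains the threshold well. (The paper does not prove this statement; it quotes it from Chudnovsky and Seymour.)

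However, there is a genuine gap: the proposal stops where the theorem begins. Everything after ``if $|\mathcal S|\ge s$ we are done'' describes what an argument ought to show rather than giving one. No exchange in (iii) is carried out, ``essentially complete'' is never made precise, and the Hall-type compensation statement is neither formulated nor proved. Nor can your tools produce a net gain on their own:
\begin{itemize}
\item[(i)] Tool (i) never applies to $K$ itself. A vertex of $A\cup B$ complete to $K$ would extend the maximum clique $K$; also, if $B\neq\emptyset$, no vertex of $A$ is complete to $B$. So singletons from $L$ only appear after vertices of $K$ have been discarded.
\item[(ii)] Tool (ii) at best breaks even. The seagull $u\!-\!y\!-\!x$ trades one singleton for one seagull, while $u\!-\!x'\!-\!x$ spends two vertices of $K$ on a single branch set.
\end{itemize}
Hence every gain must come from the unproved compensation step, and that step is the whole content of the theorem.

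This step cannot be settled by maximality and local exchanges, because packing seagulls in graphs with $\alpha\le 2$ is controlled by global conditions. Maximality of $K$ does give you two necessary conditions for free:
\begin{itemize}
\item[(a)] $G-K$ is $s$-connected. If $X$ were a cutset of $G-K$ with $|X|<s$, then $G-K-X$ would consist of two anticomplete cliques $C_1,C_2$. Every vertex of $K$ is complete to $C_1$ or to $C_2$, so maximality of $K$ forces $|C_1|+|C_2|\le k$, whence $n\le 2k+s-1$ and $s\le 0$.
\item[(b)] The complement of $G-K$ has a matching of size $s$. The vertices missed by a maximum matching form a clique of $G$, so there are at most $k$ of them.
\end{itemize}
These conditions are not sufficient in general. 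The $5$-wheel has independence number $2$ and six vertices, is $3$-connected, and its complement has a matching of size $2$. Yet it has no two disjoint seagulls: a seagull through the hub $h$ must be $v_i h v_{i+2}$, and the remaining three rim vertices induce only one edge.

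Chudnovsky and Seymour deal with this through a long structural theorem on packing seagulls in graphs with $\alpha\le 2$. To complete your approach you would need either that theorem together with a check that its hypotheses hold for $G-K$, or a complete replacement for it. The proposal currently provides neither.
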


The odd minor strengthening of Theorem~\ref{thm:cs} was recently proved, first for graphs of odd order by Chen and Deng~\cite{chendeng}, and then in general by 
Ji, Song, Weiss and Zhang~\cite{jia2025oddclique}. Thus, in order to prove Theorem~\ref{thm:main}, it suffices to prove the following. 

\begin{theorem}\label{thm:omegaimmersion}
Let $G$ be an $n$-vertex graph  with $\alpha(G)=2$ and 
    \[
    \omega(G)<
    \begin{cases}
         \lceil \frac n4\rceil , & \text{if $n$ is even},\\
        \lceil \frac {n+3}4\rceil , & \text{if $n$ is odd}.
    \end{cases} 
    \]
    Then $G$ contains $K_{\lceil \frac n2 \rceil}$ as a totally odd immersion.
\end{theorem}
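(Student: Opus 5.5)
We prove Theorem~\ref{thm:omegaimmersion} by building the immersion directly. Set $k=\lceil \frac n2\rceil$ and let $\omega=\omega(G)$, which is less than roughly $n/4$. Since $\alpha(G)=2$, the non-neighbourhood of every vertex $v$ is a clique, so $v$ has at most $\omega-1$ non-neighbours. Hence $\delta(G)\ge n-\omega > \frac{3n}{4}-1$, and $G$ is very dense.

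We choose a set $T$ of $k$ terminals. Every edge of $G[T]$ is used as itself, a path of length $1$, which is odd. For each non-adjacent pair $u,v\in T$ we must find an odd path avoiding $T$ internally, and these paths must be pairwise edge-disjoint and disjoint from the edges of $G[T]$. The natural odd paths are of length $3$: $u-a-b-v$ with $a,b\in R:=V(G)\setminus T$, $ua,ab,bv\in E(G)$. Paths of length $2$ are even and therefore forbidden, which is the key difference from ordinary immersions.

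\textbf{Step 1 (choice of $T$).} Pick $T$ so that $\overline{G}[T]$ has few edges. For example, take $T$ to minimise the number of non-edges, or pick $T$ greedily so that each terminal has few non-neighbours inside $T$. Each vertex has at most $\omega-1$ non-neighbours, so $\overline{G}$ has maximum degree below $n/4$. Moreover, $\overline{G}$ is triangle-free, because $\alpha(G)=2$, and it has clique-cover structure. So the missing pairs in $T$ induce a triangle-free graph $M$ of maximum degree less than $\omega$.

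\textbf{Step 2 (routing length-$3$ paths).} For a missing pair $uv\in M$, the common structure is rich. $u$ has at least $|R|-(\omega-1)$ neighbours in $R$, and so does $v$. The set $N(u)\cap R$ is large, about $n/2-\omega\ge n/4$. Since $\alpha(G)=2$ and $uv\notin E$, every vertex of $R$ is adjacent to $u$ or to $v$. We need an edge $ab$ with $a\in N(u)\cap R$ and $b\in N(v)\cap R$, $a\neq b$, and all such $ab$, $ua$, $bv$ edges must be distinct across pairs. We properly edge-colour $M$ using Vizing, with fewer than $\omega+1$ colours, which gives matchings $M_1,\dots,M_s$. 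For each matching we allocate disjoint portions of $R$, or use a bipartite matching/Hall argument. In this way every terminal $u$ uses each edge $ua$ at most once, and each middle edge $ab$ is used once. Counting edges: $u$ needs $\deg_M(u)<\omega<n/4$ private edges into $R$, and it has at least $n/4$ available, which is tight but plausible. The middle edges $ab$ come from $G[R]$, which has minimum degree at least $|R|-\omega$, so we can greedily choose a fresh edge between the current candidate sets. The odd/even case distinction in the bound on $\omega$ should exactly cover the boundary slack here.

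\textbf{Main obstacle.} The hard step is the edge-disjointness budget at each terminal. A terminal $u$ with $\deg_M(u)$ close to $\omega-1$ needs that many distinct neighbours in $R$. Meanwhile its non-neighbours in $R$ may number up to $\omega-1-\deg_M(u)$, and the $R$-side endpoints $b$ for different pairs must also avoid reuse. I would first attempt a Hall-type argument. For each terminal, assign its $M$-edges injectively to neighbours $a\in R$, giving $\deg_M(u)\le \omega-1$ demands against at least $|R|-\omega+1$ supply. Then, for each pair, I would find a connecting edge $ab$ in $G[R]$ not yet used, counting how many edges have been used at $a$ and $b$ so far. If $R$ has size $\lfloor n/2\rfloor$, each vertex of $R$ has degree at least $|R|-\omega$ in $G[R]$, and the total number of paths is at most $k(\omega-1)/2$. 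These counts must be balanced using the hypothesis $\omega<\lceil n/4\rceil$ (respectively $\lceil (n+3)/4\rceil$), with careful treatment of small $n$ and parity.
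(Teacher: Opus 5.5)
Your reduction to a minimum-degree condition and your overall shape match the paper: a terminal set $T$ of size $\lceil n/2\rceil$ with as few non-edges as possible, edges of $G[T]$ used as themselves, and length-$3$ paths through $R$ for the non-edges. But the proposal stops exactly where the proof has to happen, and the mechanisms you suggest for that step do not work.

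\textbf{Degree bound.} A small correction first. $\bar N(v)$ is a clique, so $|\bar N(v)|\le\omega$, not $\omega-1$; two disjoint copies of $K_\omega$ attain this. Hence $\delta(G)\ge n-1-\omega\ge n-r$, which is the hypothesis the paper works with. In a count whose margins are a few units, this off-by-one matters.

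\textbf{The bottleneck.} You misplace it. A terminal $u$ with $m_u$ non-neighbours in $T$ has at least $\lfloor n/2\rfloor-\omega+m_u$ neighbours in $R$ against only $m_u$ demands, so there is plenty of room there. The scarce resource is the middle edge $ab$, and neither of your mechanisms controls it.
\begin{itemize}
\item Allocating disjoint portions of $R$ to up to about $n/4$ Vizing colour classes leaves about two vertices per class.
\item The greedy choice fails. After removing already-used edges at $u$ and $v$, the candidate sets $X_u\subseteq N(u)\cap R$ and $X_v\subseteq N(v)\cap R$ are only guaranteed $\lfloor n/2\rfloor-\omega+1$ vertices. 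A vertex $a\in X_u$ can have $\omega$ non-neighbours, so you can only guarantee $\lfloor n/2\rfloor-2\omega$ neighbours of $a$ in $X_v$. For $\omega=r-1$ this is at most $2$, and it is negative when $n\equiv 3\pmod 4$.
\item Earlier paths may already have used linearly many edges of $G[R]$ at $a$ and $b$, and nothing in your scheme bounds that load.
\end{itemize}
Your extremal choice of $T$ is also never used.

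\textbf{The missing idea.} The paper makes that extremal choice do the work by giving each terminal a fixed partner in $B=V(G)\setminus A$, so that only one interior vertex per path has to be chosen. Take $A$ maximising $e(G[A])$ and a maximum matching in $\overline{G}[A,B]$, then run the exchange procedure. The vertices can then be labelled so that:
\begin{itemize}
\item[(i)] for $i<j$, one has $a_ia_j\in E(G)$ or $a_jb_i\in E(G)$, by $\alpha(G)=2$ and maximality of the matching;
\item[(ii)] $|N(b_i)\cap A|\le|N(a_i)\cap A|$ whenever $a_i$ has a non-neighbour $a_j$ with $j>i$, by maximality of $e(G[A])$.
\end{itemize}
Each non-edge $a_ia_j$ with $i<j$ is routed as $a_jb_iwa_i$. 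The edge $a_jb_i$ is private to that pair, so the only choice left is a common neighbour $w$ of $a_i$ and $b_i$ in $B$.

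Inequality (ii) forces $b_i$ to have at least $\lfloor n/2\rfloor-r+1+m_{a_i}$ neighbours in $B$. Combined with $|N_{G-F}(a_i)\cap B|\ge\lfloor n/2\rfloor-r+2$, this gives at least $\lfloor n/2\rfloor-2r+3+m_{a_i}\ge m_{a_i}$ candidates for $w$. This is where the parity-dependent value of $r$ is used. So the two-sided choice of $(a,b)$ that you could not control becomes a one-sided choice of $w$ for each row $i$.

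If you follow this route, also check that an edge $b_ib_{i'}$ is not used both for row $i$ with $w=b_{i'}$ and for row $i'$ with $w=b_i$. Distinctness of the edges $a_iw$ alone does not rule this out.
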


In light of Corollary~\ref{coro:coro} and the odd minor strengthening of Theorem~\ref{thm:cs}, we see that there cannot be an analogue of Theorem~\ref{thm:omegaimmersion} for odd minors.

We prove Theorem~\ref{thm:omegaimmersion} in the next section, and make related conjectures in Section~\ref{sec:problems}.

\section{Odd minors or odd immersions}

All graphs in this paper are finite and loopless. For a graph $G$ and $v\in V(G)$, we denote $\bar N(v)=V(G)\setminus (N(v)\cup \{v\})$. If $G$ is a graph with $\alpha(G)=2$, 
then every vertex $v\in G$ satisfies that $\bar N (v)$ is a clique. In particular, if $v$ witnesses the minimum degree of $G$, then we have $$\delta(G)=n-1-|\bar N (v)|\ge n-1-\omega(G).$$ 
We conclude that in order to prove Theorem~\ref{thm:omegaimmersion} it is enough to prove the following.

\begin{theorem}\label{thm:mindegree}
Let $G$ be an $n$-vertex graph  with $\alpha(G)=2$ and 
    \[
    \delta(G)\ge
    \begin{cases}
        n - \lceil \frac n4\rceil , & \text{if $n$ is even},\\
        n- \lceil \frac {n+3}4\rceil , & \text{if $n$ is odd}.
    \end{cases} 
    \]
    Then $G$ contains $K_{\lceil \frac n2 \rceil}$ as a totally odd immersion.
\end{theorem}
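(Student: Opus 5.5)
Let $k=\lceil n/2\rceil$ and let $d$ denote the minimum-degree bound from the statement, so every vertex has at most about $n/4$ non-neighbours. I would choose a set $T$ of $k$ terminals, join each adjacent pair of terminals by its edge (a path of length $1$, which is odd), and route every non-adjacent pair $u,v\in T$ along a path of length $3$ through the non-terminal set $S=V(G)\setminus T$, with $|S|=\lfloor n/2\rfloor$. A path of length $3$ is $u\,x\,y\,v$ with $x,y\in S$, $ux,xy,yv\in E(G)$. Since terminals are never interior vertices, these paths satisfy condition (III). What remains is to make all paths pairwise edge-disjoint, which means every edge inside $S$ and every edge between $T$ and $S$ is used at most once.

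First I would count. Because $\alpha(G)=2$, the set $\bar N(u)$ is a clique for every $u$, so the non-neighbours of $u$ inside $T$ form a clique of size at most $n-1-d<\lceil n/4\rceil$. Hence the missing-edge graph $M=\overline{G}[T]$ has maximum degree about $n/4$, and each terminal $u$ needs $\deg_M(u)$ private edges into $S$. The terminal $u$ has at least $d-(k-1-\deg_M(u))$ neighbours in $S$, which is roughly $n/4+\deg_M(u)$, so the needed edges exist with a margin of about $n/4$. The delicate part is the middle edges $xy$ in $S$: we need $|E(M)|\le kn/8$ distinct edges of $G[S]$. By the degree condition, $G[S]$ has minimum degree at least about $|S|-n/4\approx n/4$, so it has about $n^2/16$ edges, which is comparable to what is needed. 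The constants are tight, which explains the parity-dependent threshold.

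To organise the routing, I would reduce it to a matching or flow problem. Orient each edge $uv$ of $M$ and decompose $M$ using Vizing's theorem or a proper edge colouring with at most about $n/4+1$ colour classes $M_1,\dots,M_r$, each a matching. For each matching $M_i$ I would pick an assignment $u\mapsto x_u\in S$, injective within the class and using fresh $T$–$S$ edges, such that $x_ux_v\in E(G)$ and unused. The natural tool is Hall's theorem on an auxiliary bipartite graph. Each terminal has at least about $n/4$ available neighbours in $S$, and by $\alpha(G)=2$ two non-adjacent terminals $u,v$ satisfy $N(u)\cup N(v)\supseteq V\setminus\{u,v\}$ (any vertex adjacent to neither would form an independent triple). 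So their neighbourhoods in $S$ jointly cover $S$, which gives many candidate edges $xy$ with $x\in N(u)$, $y\in N(v)$ inside the dense graph $G[S]$.

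The choice of $T$ matters. I would take $T$ to maximise $e(G[T])$, or equivalently choose $T$ to minimise $|E(M)|$, so that the demand $|E(M)|$ is small while each terminal keeps enough neighbours in $S$. If needed, I would swap vertices between $T$ and $S$ by an exchange argument.

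I expect the main obstacle to be guaranteeing that the middle edges inside $S$ can be chosen distinct and compatible with the endpoint edges simultaneously, at the exact threshold. My first attempt would be a greedy routing processed in order of decreasing $M$-degree, with a counting argument at each step. It would show that for a pair $uv$, the number of edges $xy$ of $G[S]$ between $N_S(u)$ and $N_S(v)$ still unused exceeds the number already consumed. This would rely on the covering property $N(u)\cup N(v)\supseteq S$ and on the lower bound on $\delta(G[S])$. If the margin fails in extreme cases, for example when $n$ is odd or when $\omega$ is exactly at the threshold, I would handle those by a separate structural analysis. In that case the non-neighbourhoods are near-maximal cliques of size about $n/4$, and the graph is close to a blow-up structure where explicit routings can be written down.
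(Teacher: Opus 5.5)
\paragraph{Verdict: genuine gap.} Your outline agrees with the paper at the top level: a terminal set $T$ of size $\lceil n/2\rceil$ maximising $e(G[T])$, direct edges for adjacent pairs, length-$3$ paths through $S$ for non-adjacent pairs, and Hall's theorem. However, the step that actually carries the proof is left open: making the length-$3$ paths pairwise edge-disjoint. You flag this yourself as the main obstacle, and the tools you suggest do not resolve it.

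\begin{itemize}
\item \emph{Vizing plus Hall.} After colouring $M$ into matchings, the problem for one colour class is not a bipartite matching problem. Each pair $uv$ needs a compatible triple $ux,xy,yv$. That choice interacts with all other pairs at $u$ and $v$ through the $T$--$S$ edges, and with every pair through the shared pool of edges of $G[S]$. These interactions cross colour classes. Injectivity of $u\mapsto x_u$ within a class is neither necessary nor sufficient, so Hall's theorem has nothing to act on.
\item \emph{Greedy routing.} This rests on the unproved claim that enough unused edges remain between $N_S(u)$ and $N_S(v)$. Your own count shows this cannot come for free. For $n=4m$ the degree condition only gives $e(G[S])\ge m^2$, while allowing a demand of $m(m-1)$. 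An unstructured choice can therefore exhaust exactly the edges a later pair needs.
\item \emph{Fallback.} The ``separate structural analysis'' of extreme cases is not an argument.
\end{itemize}

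\paragraph{The missing idea.} You need a way to make the middle edges distinct automatically; the paper gets this by giving each terminal a private hub in $S$.
\begin{itemize}
\item Take a maximum matching $a_1b_1,\dots,a_{|M|}b_{|M|}$ of $\overline{G}[A,B]$, with $A=T$ and $B=S$. List the matched terminals first and pair the remaining vertices arbitrarily; by maximality, $A_2$ is complete to $B_2$.
\item Your observation that two non-adjacent vertices dominate the rest of the graph is then applied to $a_i$ and its non-neighbour $b_i$, not to two terminals. It yields $a_jb_i\in E(G)$ for every non-edge $a_ia_j$ with $i<j$.
\item The path for $a_ia_j$ is $a_j\,b_i\,w\,a_i$ with $w\in B$. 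The first edges $a_jb_i$ are distinct by construction. For fixed $i$, the edges $b_iw$ are distinct as soon as the $w$'s are.
\item Everything therefore reduces to a per-terminal count: $a_i$ and $b_i$ need at least $m_{a_i}=|\bar N(a_i)\cap A|$ common neighbours in $B$, avoiding the first edges.
\item That count needs $|N(b_i)\cap A|\le|N(a_i)\cap A|$. Maximality of $e(G[A])$ gives this up to an additive $1$. An exchange step removes the $1$ whenever $a_i$ has a non-neighbour $a_j$ with $j>i$: in the equality case, swap $a_i$ with $b_i$, which enlarges the matching.
\item Inclusion--exclusion in $B$, using the exact value of $\lfloor n/2\rfloor-2r$ (where $\delta(G)\ge n-r$), then gives at least $m_{a_i}$. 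There is no room to spare when $n\equiv 3 \pmod 4$, which is where the precise threshold is used.
\end{itemize}

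If you adopt this scheme, check one further point. For $i\neq i'$, you must never choose $w=b_{i'}$ for $a_i$ and $w'=b_i$ for $a_{i'}$ at the same time, since then $b_ib_{i'}$ is used twice. The matching in the paper's auxiliary graph $H$ does not by itself exclude this.
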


Theorem~\ref{thm:mindegree} would easily be implied by Corollary 5.9 of~\cite{Churchley}. But, as far as we can see, the proof of this corollary has at least one serious error. 
However, we are able to use some of its ideas, together with a careful choice of the terminals and a (necessarily) different counting argument to obtain our result.

Before going to the proof, we add one more piece of notation. If $G$ is a graph and $X,Y\subseteq V(G)$ are disjoint, then we denote by $G[X,Y]$ the graph with vertex set  
$X\cup Y$, and with exactly the edges of $G$ with an endpoint in $X$ and the other endpoint in $Y$.

\begin{proof}[Proof of Theorem~\ref{thm:mindegree}]
    We take a set $A\subseteq V(G)$ on $\lceil\frac n2 \rceil$ vertices with as many edges as possible. Set $B=V(G)\setminus A$ and let $M$ be a maximum matching in  
    $\overline{G}[A,B]$. We enumerate the vertices of $A$ and $B$ in the following way. Vertices $a_1,a_2,\dots, a_{|M|}\in A$ and $b_1, b_2,\dots, b_{|M|}\in B$ are such that $a_ib_i\in M$ for every $1\le i\le |M|.$ For their part, $a_{|M|+1},\dots ,a_{\lceil \frac n2 \rceil}$ and $b_{|M|+1},\dots, b_{\lfloor \frac n2 \rfloor}$ are not endpoints of any edge of $M$. Therefore, since M is maximal,
    we have $a_ib_i \in E(G)$ for every $i$ such that $M+1 \le i \le \lfloor \frac n2 \rfloor$.

    By our choice of $A$, for every $1\le i\le \lceil \frac n2 \rceil -1$, we have $$|N(a_i)\cap A|\ge |N(b_i)\cap A\setminus \{a_i\}|\ge |N(b_i)\cap A|- 1.$$ Our 
    first step will be to make one of these inequalities strict, at least when there is some $j>i$ such that $a_ia_j\notin E(G)$.

    Set $M_0=M$ and suppose that we have indices $|M_0|+1\le i<j\le \lceil\frac n2 \rceil$ with $a_ia_j\notin E(G)$ and $|N(a_i)\cap A|=|N(b_i)\cap A|-1.$ We can assume, 
    without loss of generality, that $i=|M_0|+1$. We relabel as follows: $c_i:=a_i$, $a_i:=a_j$, $a_j:=b_i$, and $b_i:=c_i$. The new set $A:=\{a_1,\dots , a_{\lceil \frac n2 \rceil}\}$, 
    has as many edges as the previous one, but now $M_1:=M_0\cup \{a_ib_i\}=M_0\cup \{a_{|M_0|+1}b_{|M_0|+1}\}$ is a matching in $\overline{G}$. If after the relabeling there exists 
    $b_k$ with $|M_1|+1\le k \le \lceil\frac n2 \rceil$ such that $a_jb_k\notin E(G)$, then we swap $a_{|M_1|+2}$ and $a_j$, and also swap $b_{|M_1|+2}$ and $b_k$, and 
    update $M_1:=M_1\cup \{a_{|M_1|+2}b_{|M_1|+2}\}$. This last addition guarantees that $M_1$ is maximum in the new  $\overline{G}[A,B]$.
    
    Now, if there exists a pair of indices $|M_1|+1\le i<j\le \lceil\frac n2 \rceil$ with $a_ia_j\notin E(G)$ and $|N(a_i)\cap A|=|N(b_i)\cap A|-1$, then we can do similar 
    exchanges to get a larger matching (which is maximum) in the updated $\overline{G}[A,B]$. Since we always get a larger matching, the process ends with a matching $M_\ell$ 
    such that if $|M_\ell|+1\le i<j\le \lceil\frac n2 \rceil$ and $a_ia_j\notin E(G)$, then we have $|N(a_i)\cap A|>|N(b_i)\cap A|-1$, that is, $|N(a_i)\cap A| \ge  |N(b_i)\cap A|$. 
    Moreover,  $M_\ell$ is maximum in the final $\overline{G}[A,B]$.

If $1\le i\le |M_\ell|$, then we have $|N(b_i)\cap A|=|N(b_i)\cap A\setminus \{a_i\}|\le |N(a_i)\cap A|$ since we have $a_ib_i\in M_\ell$ and, consequently, $a_ib_i\notin E(G).$ 
So in general, for  $1\le i\le \lceil \frac n2 \rceil$ we have 
    \begin{equation}\label{eq:neighbourbi0}
    \text{if there exists } j>i \text{ such that } a_ia_j\notin E(G), \text{then we have } |N(b_i)\cap A|\le |N(a_i)\cap A|,
    \end{equation}
as we hoped for. 

    Set $A_1=\{a_1,\dots , a_{|M_\ell|}\}$, $B_1=\{b_1,\dots , b_{|M_\ell|}\}$, $A_2=A\setminus A_1$, and $B_2=B\setminus B_1$. For $a_i\in A_2$ and $b_j\in B_2$ we 
    must have $a_ib_j\in E(G)$ as otherwise we could add the edge $a_ib_j$ to $M_\ell$ contradicting its maximality. If we have $a_i\in A_1$, then $a_ib_i\notin E(G)$ and, 
    since $\alpha(G)=2,$ for every $j\ne i$ we have at least one of $a_ja_i\in E(G)$ or $a_jb_i\in E(G).$ Altogether, if we have $1\le i<j\le \lceil \frac n2 \rceil$, 
    then we have at least one of $a_ja_i\in E(G)$ or $a_jb_i\in E(G).$ Therefore, for $i<j$ the following is always an edge of $G$,
    \[
    f_{ij}=
    \begin{cases}
        a_ia_j , & \text{if $a_ia_j\in E(G)$},\\
        b_ia_j , & \text{otherwise}.
    \end{cases} 
    \]
    We also define $F=\cup_{i<j}\{f_{ij}\}$. 
    
    The set $A$ will be our set of terminals, and if for $i<j$ we have $a_ia_j\notin E(G)$, our immersion will have a length-3 path $a_jb_iwa_i$, where the first edge is in 
    $F$ (by definition), and where $w$ is a common neighbour of $a_i$ and $b_i$ in $B$. To see that we can do this and keep the paths edge-disjoint, we need a lower bound on 
    the number of common neighbours that $a_i$ and $b_i$ have in $B$ after $F$ is removed. For this, we first bound the number of neighbours each of $a_i$ and $b_i$ has in $B$. 
    For our purposes, we can restrict ourselves to the case where there is some $j>i$ such that $a_ia_j\notin E(G)$.

    Set 
\[
    r=
    \begin{cases}
         \lceil \frac n4\rceil , & \text{if $n$ is even},\\
         \lceil \frac {n+3}4\rceil , & \text{if $n$ is odd},
    \end{cases} 
    \]
and for every $a_i\in A$, set $$m_{a_i}=|\bar N (a_i)\cap A|.$$ Since we have $\delta(G)\ge n-r$ we obtain 
\begin{equation*}
\begin{split}
|N_G(a_i)\cap B| & \ge \delta(G)-|N_G(a_i)\cap A| \\ 
&\ge n-r - (\lceil \frac n2 \rceil -1 -m_{a_i}) \\
&=\lfloor\frac n2\rfloor -r+m_{a_i}+1.
\end{split}
\end{equation*}
Let us see what happens to this bound when $F$ is removed. For every edge $f_{ik}$ that joins $a_i$ to some vertex in $B$, there is a vertex $a_k$ with $k<i$ such that $a_ia_k\notin E(G)$.  
Thus if there is some $j>i$ such that $a_ia_j\notin E(G)$, then we have 
\begin{equation}\label{eq:neighbourai}
\begin{split}
|N_{G-F}(a_i)\cap B| &\ge|N_{G}(a_i)\cap B|-(m_{a_i}-1) \\
&\ge \lfloor\frac n2\rfloor -r+2.
\end{split}
\end{equation}

There is no edge of $F$ in $G[B]$. Therefore, by using \eqref{eq:neighbourbi0} and \eqref{eq:neighbourai}, we obtain that when there is some $j>i$ such that $a_ia_j\notin E(G)$ we have
\begin{equation}\label{eq:neighbourb_i1}
\begin{split}
|N_{G-F}(b_i)\cap B| & = |N_{G}(b_i)\cap B|\\
& =|N_G(b_i)|- |N_G(b_i)\cap A|\\
& \ge |N_G(b_i)|- |N_G(a_i)\cap A|\\
&\ge n-r - (\lceil \frac n2 \rceil -1 -m_{a_i}) \\
&\ge \lfloor\frac n2\rfloor -r+m_{a_i}+1.
\end{split}
\end{equation}

We can now get a bound on the number of common neighbours that $a_i$ and $b_i$ have in $B$ after $F$ is removed. We have  $|B|\ge |N_{G-F}(a_i)\cap B|+|N_{G-F}(b_i)\cap B|-|N_{G-F}(a_i)\cap N_{G-F}(b_i)\cap B|$. Rearranging and using \eqref{eq:neighbourai},  \eqref{eq:neighbourb_i1}, and the information in Table 1,  we obtain that when there is some $j>i$ such that $a_ia_j\notin E(G)$ we have
\begin{equation}\label{eq:commonneigh}
\begin{split}
|N_{G-F}(a_i)\cap N_{G-F}(b_i)\cap B| & \ge 2(\lfloor\frac n2\rfloor -r)+m_{a_i}+3-\lfloor\frac n2\rfloor\\
&\ge m_{a_i}.
\end{split}
\end{equation}

\begin{table}[h!]
\centering
\begin{tabular}{|c|c|c|}
\hline
$n$ (mod 4) & $\lfloor\frac n2\rfloor -2r$  \\ \hline
1 & $\frac{n-1}2 -\frac{n+3}2 =-2$   \\
2 & $\frac{n}2 -\frac{n+2}2 =-1$   \\
3 & $\frac{n-1}2 -\frac{n+5}2 =-3$   \\
 \hline
\end{tabular}\caption{Values of $\lfloor\frac n2\rfloor -2r$ according to the value of $n$ (mod 4).}

\end{table}

To end the proof, we construct an auxiliary bipartite graph $H$ as follows. We take $V(H)=C\cup D$, where 
$C$ is the set of non edges in $G[A]$, and 
$D$ is the set of edges of $G$ with an endpoint in $A$ and the other endpoint in $B$. For every pair of indices $i<j$, if we have $a_ia_j\in C$, then we put an edge in 
$H$ joining $a_ia_j$ with $a_iw\in D$ whenever we have $w\in N_{G-F}(a_i)\cap N_{G-F}(b_i)\cap B$.

If we find a matching in $H$ that covers $C$, then for every non edge $a_ia_j$ of $G[A]$, say with $i<j$, we can use the path $a_iwb_ia_j$ to obtain a totally odd immersion of 
$K_{\lceil \frac n2 \rceil}$ with $A$ as its set of terminals. (Note that $b_ia_j\in F$, and so it is not used in any other path, while the edges $a_iw$ and $wb_i$ are also not 
used in any other path because we have a matching in $H$.) Thus it suffices to find such a  matching. To use Hall's Theorem we consider an arbitrary $S\subseteq C$. For $a_ia_j\in S$ 
with $i<j$ we have $|N_H(a_ia_j)|=|N_{G-F}(a_i)\cap N_{G-F}(b_i)\cap B|$. In fact, for $i$ fixed we have 
\begin{equation}\label{eq:last}
    |\displaystyle\bigcup_{i<j,\,\, a_ia_j\in S}N_H(a_ia_j)|=|N_{G-F}(a_i)\cap N_{G-F}(b_i)\cap B|.
\end{equation}

Let $T_S$ be the set of indices $i\in \{1, \dots ,\lceil\frac n2\rceil\}$ such that there exists $a_ia_j\in S$ with $i<j.$ Recalling that we have set $m_{a_i}=|\bar N (a_i)\cap A|$, 
and using \eqref{eq:commonneigh} and \eqref{eq:last}, we obtain

\begin{equation*}
\begin{split}
|S| & \le |\cup_{i \in T_s} \bar N(a_i)\cap A|\\
&\le \sum_{i \in T_s} |\bar N(a_i)\cap A|\\
& \le \sum_{i \in T_s} |N_{G-F}(a_i)\cap N_{G-F}(b_i)\cap B|\\
& = \sum_{i \in T_s} |\displaystyle\bigcup_{i<j,\,\, a_ia_j\in S}N_H(a_ia_j)|\\
& = |\biguplus_{i \in T_s} \,\,\displaystyle\bigcup_{i<j,\,\, a_ia_j\in S}N_H(a_ia_j)|\\
& = |N_H(S)|,
\end{split}
\end{equation*}
where we use $\biguplus$ to denote disjoint union. Therefore, by Hall's Theorem, there is in $H$ a matching that covers $C$, and the result follows.
\end{proof}

We note that if instead of a totally odd immersion we only looked for an immersion in Theorem~\ref{thm:mindegree}, then we could use Lemma 2.1 of paper~\cite{FoxWei} of Fox and 
Wei, which gives an immersion where all paths are of length 1 or~2.

\section{Open problems}\label{sec:problems}

In light of Theorem~\ref{thm:main} we pose a conjecture in the spirit of the work of Duchet and Meyniel~\cite{DuchetMeyniel}.

\begin{conjecture}
    Every $n$-vertex graph $G$ contains $K_{\lceil \frac n{\alpha(G)}\rceil}$ as an odd minor or as a totally odd immersion.
\end{conjecture}

Approaching this conjecture from the odd minor side is a result of Kawarabayashi and Song~\cite{KawaSongOdd}, and from the odd immersion side a result of Bustamante, 
Quiroz, Stein and Zamora~\cite{BustamanteQSZ}.

Further, we wonder whether every graph satisfies the Odd Hadwiger Conjecture or Conjecture~\ref{conj:tosi}.

\begin{conjecture}\label{conj:oddorodd}
    Every graph $G$ contains $K_{\chi(G)}$ as an odd minor or as a totally odd immersion.
\end{conjecture}

It would be nice to see, at least, a linear approximation of this result. If we allow for totally odd \emph{weak} immersions, then there is a linear bound due to 
McFarland~\cite{McFarland}. It would also be nice to see a short(ish) proof of the case $\chi(G)=5$, which was announced for Odd Hadwiger by Guenin~\cite{Guenin} about 
20 years ago, and is open for Conjecture~\ref{conj:tosi}.

Conjecture~\ref{conj:oddorodd} is the odd generalisation of a conjecture posed by the third author in the 2024 CODICIS Workshop (\url{https://dquirozb.github.io/daniel/codicis.html}).

\begin{conjecture}[Quiroz]\label{conj:quiroz}
    Every graph $G$ contains $K_{\chi(G)}$ as a minor or as an immersion.
\end{conjecture}

After a breakthrough result of DeVos, Dvo\v{r}\'ak, Fox, McDonald, Mohar, and Scheide~\cite{DeVosDFMMS}, it was proved by Dvo\v{r}\'ak and Yepremyan~\cite{DvovrakYepremyan} that 
every graph $G$ contains  $K_{\lceil \chi(G)/11\rceil}$ as an immersion; this is the closest result to Conjecture~\ref{conj:quiroz}. (An even better bound was proved by Gauthier, 
Le, and Wollan for weak immersions.) The first open case for this conjecture is $\chi(G)=8$. 

\section*{Acknowledgments}

 The third author thanks Henry Echeverr\'ia for helpful comments. Antonia Berm\'udez and Daniel A. Quiroz thankfully acknowledge support from ANID FONDECYT Regular 1252197. 
 Bruno L. Netto and Daniel A. Quiroz thankfully acknowledge support from ANID MATH-AMSUD MATH230035. Bruno L. Netto is supported by CAPES (88887.670803/2022-00).

\bibliographystyle{plain}
\bibliography{references}

\end{document}